\newtheorem*{acknowledgement}{Acknowledgement}
\newtheorem{corollary}{\bf Corollary}
\newtheorem{lemma}{\bf Lemma}
\newtheorem{remark}{Remark}
\newtheorem{theorem}{\bf Theorem}
\renewcommand{\div}{{\rm div}}
\theoremstyle{definition}
\numberwithin{equation}{section}
\title[Steady Ricci solitons]{A comparison theorem for steady Ricci solitons}
\author{B. Leandro}
\author{J. Poveda}
\address{Universidade Federal de Goi\'as, IME,
	Caixa Postal 131, CEP 74690-900, Goi\^ania, GO, Brazil.}
	\email{bleandroneto@ufg.br}
	\email{jeferson.arley@discente.ufg.br}
\keywords{gradient Ricci soliton; Ricci flow; curvature estimates} \subjclass[2020]{Primary 53C25, 53C20, 53E20}
\date{\today}
\begin{document}

\begin{abstract}
We prove that a steady gradient Ricci soliton is either Ricci flat with a constant potential function, or a quotient of the product steady soliton $N^{n-1}\times\mathbb{R}$, where $N^{n-1}$ is Ricci flat, or isometric to the Bryant soliton (up to scalings), provided that a couple of geometric conditions inspired by the cigar soliton hold. As an application, we prove that any complete non-compact steady Ricci soliton with positive Ricci curvature controlled by the scalar curvature $R$, curvature tensor $Rm$ satisfying $|Rm|r\to o(1)$ and $R\to\infty$, as $r\to\infty$, must be the Bryant soliton. Moreover, we prove that any complete steady soliton with positively pinched Ricci curvature must be Ricci flat.
\end{abstract}

\maketitle

\section{Introduction}\label{int}

A complete Riemannian metric $g$ on a smooth manifold $M^n$ is called a {\it gradient steady Ricci soliton} if there exists a smooth potential function $f$ on $M^n$ such that the Ricci tensor $Ric$ of the metric $g$ satisfies the equation
\begin{equation}
\label{Eq1}
Ric=Hess\,f.
\end{equation} Here, $Hess\,f$ denotes the Hessian of $f.$ Such a function $f$ is called a potential function of the gradient steady soliton. Clearly, when $f$ is a constant the gradient steady Ricci soliton is simply a Ricci flat manifold. Thus Ricci solitons are natural extensions of Einstein metrics. Gradient steady solitons play an important role in Hamilton’s Ricci flow \cite{Hamilton2} as they correspond to translating solutions and often arise as Type $II$ singularity models, thus playing a crucial role in the singularity analysis of the Ricci flow \cite{Perelman2}.

It is well-known that compact gradient steady solitons must be Ricci flat. For dimension $n=2,$ Hamilton {\cite{Hamilton3}} obtained the {\it cigar soliton}, i.e., the first example of a complete noncompact gradient steady soliton on $\mathbb{R}^2,$ where the explicit metric is given by $g=\frac{dx^2 + dy^2}{1+x^2 + y^2}$ and the potential function is $f= - \ln (1+x^2 + y^2).$ It has positive curvature and is asymptotic to a cylinder of finite circumference at infinity. The scalar curvature decays exponentially. It is important to highlight that $$R|\nabla f|=|\nabla R|$$ on the cigar soliton. This identity also is trivially satisfied, in higher dimensions, for Ricci flat solitons with a constant potential function, a quotient of the product steady soliton $N^{n-1}\times\mathbb{R}$, where $N^{n-1}$ is Ricci flat and the product of the cigar soliton and any complete Ricci flat manifold.

For dimension $n\ge 3,$ Robert Bryant {\cite{Bryant1}} proved that
there exists, up to scaling, a unique complete rotationally symmetric gradient Ricci soliton on $\mathbb{R}^n.$ Its sectional curvature is positive and the scalar curvature $R$ decays like $r^{-1}$ at infinity, and the volume of the geodesic balls $B_r(0)$ grows according to the order of $r^{(n+1)/2}$. Here, $r$ denotes the geodesic distance from the origin.

It was conjectured by Perelman {\cite{Perelman2}} that in dimension $n=3$ the Bryant soliton is the only complete noncompact ($\kappa$-noncollapsed) gradient steady soliton with positive sectional curvature. This conjecture was proved by Brendle {\cite{brendle2013}} in 2013 and was extended by himself in 2014 {\cite{brendle2014}} for arbitrary dimension $n\geq 4$ under the additional condition that the steady Ricci soliton is asymptotically cylindrical. Moreover, Deng and Zhu \cite{deng2020}  proved that any noncompact $\kappa$-noncollapsed steady Ricci soliton with nonnegative curvature operator must be rotationally symmetric if it has a linear curvature decay. Hence, for $n\geq 4,$  it is desirable to find geometrically interesting conditions under which the uniqueness would hold.

In this context, Cao and Chen {\cite{cao2011}} proved that a complete noncompact $n$-dimensional $(n \ge 3)$ locally conformally flat gradient steady Ricci soliton with positive sectional curvature is isometric to the Bryant soliton. Moreover, they
showed that a complete noncompact $n$-dimensional locally conformally flat gradient steady Ricci soliton is either flat or isometric to the Bryant soliton (see also Catino-Mantegazza {\cite{catino}}). For $n = 4,$ Chen and Wang {\cite{chenwang}} showed that any $4$-dimensional complete half-conformally flat gradient steady Ricci soliton is either Ricci flat, or locally conformally flat (hence isometric to the Bryant soliton). In {\cite{cao2014}}, Cao, Catino, Chen, Mantegazza and Mazzieri proved that any $n$-dimensional $(n\ge 4)$ complete Bach-flat
gradient steady Ricci soliton with positive Ricci curvature is isometric to the Bryant soliton. Very recently, Cao and Yu proved that any $n$-dimensional complete noncompact gradient steady Ricci soliton with vanishing $D$-tensor is either Ricci flat, or a quotient of the product steady soliton $N^{n-1}\times \mathbb{R},$ where $N^{n-1}$ is Ricci flat, or isometric to the Bryant soliton (up to scalings). Here, $D$-tensor is a $3$-tensor defined by \eqref{tensorD}, see \cite{cao2020}.

In recent years, a lot of progress has been made in understanding the curvature estimate of gradient steady Ricci solitons, {see}, e.g. \cite{chan2019,chow2011,deng2020,lopez2013}.

It follows from a result by Chen \cite{chen2009} that  every a complete gradient steady Ricci soliton has nonnegative scalar curvature, i.e., $R\geq 0.$ On the other hand, it is well know that a complete gradient steady Ricci soliton satisfies 
\begin{equation}
R+|\nabla f|^2 ={C_{0}}\nonumber, 
\end{equation} where $C_{0}$ is a positive constant. In other words, the scalar curvature of a complete gradient steady Ricci soliton is uniformly bounded. In particular, up to normalization, we may consider
\begin{equation}
\label{eqR1}
R+|\nabla f|^2 =1. 
\end{equation} This therefore implies that a (normalized) gradient steady Ricci soliton satisfies $0\leq R\leq 1.$

Before proceeding, we recall that Brendle \cite{brendle2011} (see also \cite[Proposition 5.2]{cao2014}) proved the following result.

\begin{theorem}[Brendle, \cite{brendle2011}]
\label{thm1}

Let $(M^n, g, f)$ ($n\ge 3$) be a complete $n$-dimensional gradient steady Ricci soliton. Suppose that the 
scalar curvature R of $(M^n, g)$ is positive and approaches zero at infinity. Denote by $\psi: (0,1) \to \mathbb R$ the smooth function such that the vector field 
$$X:=\nabla R +\psi (R)\nabla f=0$$ on the Bryant soliton, and define $u: (0,1) \to \mathbb R$ by
$$ u(s)=\log \psi (s) +\frac{1}{n-1}\int_{1/2}^{s} \left(\frac{n}{1-t} - \frac{n-1-(n-3)t}{(1-t)\psi(t)}\right) dt. $$
Moreover, assume that there exists an exhaustion of $M^n$ by bounded domains $\Omega_l$ 
such that
\begin{equation}\label{asym}
\lim_{l \to \infty}  \int_{\partial\Omega_l} e^{u(R)} \langle \nabla R +\psi (R) \nabla f, \nu\rangle \, = 0. 
\end{equation} Then $X=0$ and $D_{ijk}=0$. In particular, for $n=3$, $(M^3, g, f)$ is isometric to the Bryant soliton. 
\end{theorem}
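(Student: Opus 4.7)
The plan is to establish a pointwise divergence identity of the form
$$\div\!\bigl(e^{u(R)}\, X\bigr)\;=\;e^{u(R)}\bigl(c_n\,|D|^2 + Q(X)\bigr),$$
with $c_n>0$ and $Q$ a non-negative quadratic form in $X$, and then integrate it over the exhaustion $\{\Omega_l\}$. Since the boundary term produced by the divergence theorem is precisely the one appearing in hypothesis \eqref{asym}, the hypothesis forces
$$\int_{M^n} e^{u(R)}\bigl(c_n\,|D|^2 + Q(X)\bigr)\,dV \;=\; 0.$$
The positivity $R>0$ keeps $e^{u(R)}$ smooth and strictly positive on $M^n$, so the vanishing of this non-negative integral yields $D_{ijk}\equiv 0$ and $X\equiv 0$ pointwise.

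The heart of the argument is the verification of the divergence identity. Expanding $X=\nabla R+\psi(R)\nabla f$, one obtains
$$\div(e^{u(R)}X)=e^{u(R)}\bigl(\Delta R+\psi'(R)|\nabla R|^2+\psi(R)\Delta f+u'(R)\langle\nabla R,X\rangle+u'(R)\psi(R)\langle\nabla f,X\rangle\bigr),$$
and I would rewrite each term using the standard steady soliton identities $\Delta f=R$, $\nabla R=2\,Ric(\nabla f,\cdot)$, and $R+|\nabla f|^2=1$. The function $\psi$ is defined by the requirement that $X\equiv 0$ on the Bryant soliton, which fixes a first-order ODE for $\psi$. With $\psi$ so chosen, the remaining freedom sits in the weight $u$: the ODE for $u'$ visible in the integrand that defines $u$ in the statement is exactly the one that cancels the terms linear in $X$ that are not already packaged into $Q(X)$. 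Once these cancellations are carried out, the residual trace-free contribution of $Ric$ along and transverse to the level sets of $f$ assembles into $|D|^2$, where $D$ is the $3$-tensor of \eqref{tensorD}.

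For the final assertion with $n=3$, the vanishing $D_{ijk}=0$ on a three-dimensional gradient steady soliton is well known to be equivalent to local conformal flatness, so the classification of locally conformally flat steady solitons due to Cao--Chen \cite{cao2011} applies and the soliton is isometric to the Bryant soliton. The main obstacle I expect is the algebraic step: the formula for $u$ is essentially reverse-engineered from the Bryant ODE, and verifying that the specific pair $(\psi,u)$ in the statement turns $\div(e^{u(R)}X)$ into the manifestly non-negative right-hand side requires a careful but mechanical computation, together with a check that the quadratic form $Q(X)$ that emerges genuinely has a sign rather than being only a sum of signed terms.
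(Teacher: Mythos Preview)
The paper does \emph{not} contain a proof of Theorem~\ref{thm1}: it is quoted as a result of Brendle~\cite{brendle2011} (see also \cite[Proposition~5.2]{cao2014}) and serves only as motivation for the paper's own results. Consequently there is no ``paper's own proof'' against which to compare your attempt.

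That said, your outline is the correct strategy for Brendle's argument, and it is exactly the template the present paper follows for its own main theorems: Lemma~\ref{lema22} is a divergence identity of the same type you describe (with the specific weight $(1-R)^{-1/2}$ in place of $e^{u(R)}$), and the proofs of Theorems~\ref{thmA} and~\ref{thmMunteanu} proceed by integrating that identity over an exhaustion and using the boundary hypothesis to force $D\equiv 0$. So while you cannot be checked against this paper, your plan aligns with both Brendle's original approach and the Robinson-type method the authors adopt here.

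Two small caveats on your displayed expansion. First, $\div(\psi(R)\nabla f)=\psi'(R)\langle\nabla R,\nabla f\rangle+\psi(R)\Delta f$, not $\psi'(R)|\nabla R|^2+\psi(R)\Delta f$. Second, the product rule gives only the single cross term $u'(R)\langle\nabla R,X\rangle$; your extra term $u'(R)\psi(R)\langle\nabla f,X\rangle$ should not appear (together with the previous term it would yield $u'(R)|X|^2$, which is not what the chain rule produces). Also note the sign convention in this paper is $\nabla R=-2\,Ric(\nabla f)$, cf.~\eqref{RF1}. These are bookkeeping issues, not structural ones; the mechanism you describe---choose $\psi$ from the Bryant ODE, then choose $u$ so that the linear-in-$X$ terms cancel and what remains is $c_n|D|^2$ plus a non-negative quadratic in $X$---is precisely how the argument runs.
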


This combined with Proposition 5.1 by Cao et al. \cite{cao2014} implies that a complete gradient steady Ricci soliton $(M^{n},g_{ij}, f),$ ($n\ge 4$), with positive Ricci curvature such that the scalar curvature $R$ approaches zero at infinity so that the condition~\eqref{asym} is satisfied for some exhaustion of $M^n$ by bounded domains $\Omega_l$ is isometric to the Bryant soliton.

We highlight that the proof of Theorem \ref{thm1} is partially based on the ideas outlined by Robinson \cite{robinson}  to study the uniqueness of static black holes, which depends essentially of a suitable divergence formula. In this paper, motivated by the result obtained by Brendle \cite{brendle2011} and the ideas by Robinson \cite{robinson}, we obtain a divergence formula (Lemma \ref{lema22}) for the steady gradient Ricci soliton by following  the ideas of Robinson in order to obtain a rigidity result. More precisely, we have established the following result. 

\begin{theorem}
\label{thmA} Let $\big(M^n,\,g,\,f)$ be a complete noncompact steady gradient Ricci soliton satisfying 
\begin{eqnarray}
\label{riccibound}
\sigma R|\nabla f|\leq|\nabla R|,
	\end{eqnarray}
	where $\sigma= \frac{(n+1)+\sqrt{(n-1)(7n-13)}}{3n-2}.$
	Suppose that there exists an exhaustion of $M^n$ by bounded domains $\Omega_\ell$ such that
\begin{eqnarray}\label{asympBene}
\displaystyle\lim_{\ell\rightarrow+\infty} \int_{\partial \Omega_\ell}|\nabla R + R\nabla f| = 0.
\end{eqnarray} Then $(M^n,\,g,\,f)$ is either Ricci flat with a constant potential function, or a quotient of the product steady soliton $N^{n-1}\times\mathbb{R}$, where $N^{n-1}$ is Ricci flat, or isometric to the Bryant soliton (up to scalings). 
\end{theorem}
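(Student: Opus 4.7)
The approach is to mimic Brendle's strategy \cite{brendle2011}, which itself adapts Robinson's uniqueness argument for static black holes: establish a weighted divergence identity whose bulk term is manifestly nonnegative under the curvature assumption \eqref{riccibound}, integrate it over the exhaustion $\Omega_{\ell}$, use the asymptotic condition \eqref{asympBene} to kill the boundary contribution, and finally read the resulting vanishing of the bulk term as a rigidity statement on the Cao--Chen $D$-tensor.

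Concretely, I would invoke Lemma \ref{lema22}, which the introduction announces as the analogue of Brendle's divergence formula tailored to the ``cigar vector field'' $X:=\nabla R+R\nabla f$ appearing in \eqref{asympBene}. I expect this lemma to take the schematic shape
\[
\div\bigl(\phi(R)\,X\bigr)\,=\,\phi(R)\bigl(c\,|D|^{2}+\Phi(R,|\nabla f|,|\nabla R|)\bigr),
\]
where $\phi$ is a nonnegative scalar weight depending only on $R$, $c>0$ is a dimensional constant, $D$ is the $3$-tensor defined in \eqref{tensorD}, and $\Phi$ is a combination which, after expanding with the standard soliton identities ($\Delta f=R$, $R+|\nabla f|^{2}=1$, the Bochner-type formula $\Delta R=\langle\nabla f,\nabla R\rangle-2|Ric|^{2}$, and the proportionality of $\nabla R$ with $Ric(\nabla f)$), reduces to a quadratic form in the two scalars $|\nabla R|$ and $R|\nabla f|$. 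The specific value $\sigma=\frac{(n+1)+\sqrt{(n-1)(7n-13)}}{3n-2}$ is then forced to arise as the threshold root of the discriminant of that quadratic form, so that hypothesis \eqref{riccibound} guarantees $\Phi\geq 0$ pointwise. Producing this identity---pinning down simultaneously the correct weight $\phi$ and the exact value of $\sigma$---is in my view the technical core of the paper and the main obstacle: the algebra has to collapse every cross term into one of the two nonnegative pieces without indefinite remainder, which is a very tight algebraic constraint.

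Granting the identity, the integration step is routine. Since $0\leq R\leq 1$ by Chen \cite{chen2009} combined with \eqref{eqR1}, the weight $\phi(R)$ is uniformly bounded on $M^{n}$, so Stokes' theorem yields
\[
\int_{\Omega_{\ell}}\div\bigl(\phi(R)X\bigr)\,d\mu\,=\,\int_{\partial\Omega_{\ell}}\phi(R)\langle X,\nu\rangle\;\leq\;(\sup\phi)\int_{\partial\Omega_{\ell}}|X|\;\longrightarrow\;0
\]
as $\ell\to\infty$, by \eqref{asympBene}. Combined with the pointwise nonnegativity of the bulk integrand, this forces $\phi(R)\,|D|^{2}\equiv 0$ and $\phi(R)\,\Phi\equiv 0$ on $M^{n}$. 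A unique-continuation argument (using the real-analyticity of steady gradient Ricci solitons in harmonic coordinates) then spreads the vanishing of $D$ across all of $M^{n}$.

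The conclusion now follows immediately from the classification of Cao--Yu \cite{cao2020} recalled in the introduction: any complete gradient steady Ricci soliton with $D\equiv 0$ is either Ricci flat with a constant potential, a quotient of the product steady soliton $N^{n-1}\times\mathbb{R}$ with $N^{n-1}$ Ricci flat, or, up to scaling, the Bryant soliton. This is precisely the trichotomy claimed in Theorem~\ref{thmA}.
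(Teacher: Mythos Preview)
Your plan is correct and follows the paper's approach closely. Two small discrepancies are worth flagging. First, Lemma~\ref{lema22} is not written directly for the ``cigar'' field $X=\nabla R+R\nabla f$; the vector field is $Y=\frac{\nabla R}{\sqrt{1-R}}-2\sqrt{1-R}\nabla f$, and the identity carries $|\nabla R+2R\nabla f|^{2}$ rather than $|X|^{2}$. To reach a boundary integral of $(1-R)\langle X,\nu\rangle$ the paper performs several further integrations by parts (adding and subtracting divergences of $(1-R)^{2}\nabla f$ and $(1-R)R\nabla f$) and then applies the Cauchy--Schwarz bound $2\,Ric(\nabla f,\nabla f)\le |\nabla R|\,|\nabla f|$, which converts the identity into an \emph{inequality}; the quadratic form in $|\nabla R|$ and $R|\nabla f|$ whose larger root is exactly $\sigma$ emerges only after this step. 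Second, to pass from $D=0$ on $\{R<1\}$ to $D=0$ everywhere the paper does not invoke real analyticity but simply notes that $\{R<1\}$ is dense (were $R\equiv 1$ on an open set, \eqref{eqR1} would force $\nabla f=0$ there and hence $f$ constant), which suffices by continuity.
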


\begin{remark}
\label{remarkajuda}
We point out that no sectional curvature bound is assumed. This is important because, in dimension $4$, the sectional curvature of shrinking and steady Ricci solitons may change sign. Moreover, equality in \eqref{riccibound} holds for the cigar soliton with $\sigma=1$, and \eqref{asympBene} is trivially satisfied. In that sense, our theorem is a comparison theorem with the geometry of the cigar soliton. 
\end{remark}

	\begin{remark}\label{curvature scalar bound}
	    In \cite[Lemma 3]{lopez2013}, the authors proved that a nonnegatively curved  steady soliton satisfies the following inequality: $$|Ric |^{2}\leq \frac{R^{2}}{2}.$$ Therefore,
	    $$|\nabla R|^{2}\leq 2R^2|\nabla f|^2.$$
	   This shows us that \eqref{riccibound} can be interpreted as a lower bound for $|\nabla R|$. In fact, $\sigma \leq \dfrac{1+\sqrt{7}}{3}$, see also \cite[Lemma 2.3]{deng2020}.
	\end{remark}

One possible conjecture concerning the curvature decay of a steady gradient Ricci soliton is that the decay rate is either linear or exponential \cite{chan2022}. In \cite[Corollary 1]{chan2019} the author proved the scalar curvature of a steady Ricci soliton with nonnegative Ricci curvature decays exponentially if an asymptotic condition holds.

 In \cite[Theorem 9.56]{CLN1}, Chow, Lu and Ni proved that the scalar curvature of complete and noncompact steady gradient Ricci soliton with positively pinched Ricci curvature has exponential decay. In that sense, condition \eqref{asympBene} can be replaced by an exponential decay for the scalar curvature, i.e.,
    \begin{eqnarray*}
    R = o(e^{-r}),\quad r\rightarrow\infty,
    \end{eqnarray*}
    where $r$ stands for the geodesic distance from a fixed point.

    In \cite{munteanu}, the authors proved that the sectional curvature $Rm$ of a steady gradient Ricci soliton (non-flat) with potential function $f$ bounded from above by a constant and such that $|Rm|r=o(1)$, at infinity, decays like
    \begin{eqnarray}\label{munteanu22}
    |Rm|\leq c(1+r)^{3(n+1)}e^{-r},
    \end{eqnarray}
where $c$ is a positive constant and $r$ stands for the geodesic distance. It is known that
the assumption over $f$ holds true when $Ric > 0$. The exponential decay rate in the theorem is sharp as seen from $M = N\times\Sigma$; where $\Sigma$ is the cigar soliton and $N$ a compact Ricci flat manifold.

Moreover, Chan and Zhu \cite{chan2022} proved that a steady gradient Ricci soliton $(M,\,g,\,f)$ with $|Rm|\to 0$, then either one of the following estimates holds outside a
compact set of $M$:
\begin{eqnarray*}
&&C^{-1}r^{-1} \leq |Rm| \leq Cr^{-1};\\
&&C^{-1}e^{-r} \leq |Rm| \leq Ce^{-r},
\end{eqnarray*}
where $C$ is a positive constant and $r$ is the distance function.

The control of the curvature is an important issue in the analysis of a Ricci soliton and there are several results proving that the scalar curvature controls the sectional curvature, see \cite{cui2020,chan2019}. Here is important to emphasize that four dimensional Ricci solitons must satisfies the following condition:
\begin{eqnarray*}
|Rm|\leq A\left(\frac{|\nabla Ric|}{|\nabla f|} + |Ric|\right),
\end{eqnarray*}
where $A$ is an universal positive constant (see also \cite{cui2020,munteanu2015}). Please, see also \cite[Proposition 2]{chan2019}.

Therefore, inspired by the above curvature properties of four dimensional steady Ricci solitons we will prove that if the Ricci curvature is controlled by the scalar curvature we get the Bryant soliton. Moreover, we will not assume that the steady Ricci soliton is $\kappa$-noncollapsed.

\begin{theorem}
\label{thmMunteanu} Let $\big(M^n,\,g,\,f)$ be a complete noncompact steady gradient Ricci soliton with positive Ricci curvature and
\begin{eqnarray*}
|Ric| \leq \frac{1}{4}\left[\frac{3}{2}\frac{|\nabla R|^2}{|\nabla f|^2}-2R^2\right].
\end{eqnarray*}
Suppose that
\begin{eqnarray*}
\lim_{r\to\infty}R=0\quad\mbox{and}\quad\lim_{r\to\infty}|Rm|r=o(1).
\end{eqnarray*}
 Then $(M^n,\,g,\,f)$ is isometric to the Bryant soliton (up to scalings). 
\end{theorem}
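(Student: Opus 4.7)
The plan is to verify both hypotheses of Theorem \ref{thmA}---the gradient inequality \eqref{riccibound} and the boundary-integral decay \eqref{asympBene}---and then apply that theorem, using $Ric>0$ to eliminate the Ricci-flat alternatives in its trichotomy.

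First, for \eqref{riccibound}, I would combine the soliton identity $\nabla R = 2\,Ric(\nabla f)$ (which, via Cauchy--Schwarz, gives $|Ric|\geq \frac{|\nabla R|}{2|\nabla f|}$) with the standing hypothesis $|Ric|\leq \frac{1}{4}\bigl[\frac{3}{2}\frac{|\nabla R|^{2}}{|\nabla f|^{2}}-2R^{2}\bigr]$ to obtain the quadratic inequality $3u^{2}-4u-4R^{2}\geq 0$ in $u=|\nabla R|/|\nabla f|$. Its positive branch reads
\begin{equation*}
\frac{|\nabla R|}{|\nabla f|}\;\geq\;\frac{2+\sqrt{4+12R^{2}}}{3}.
\end{equation*}
Checking that this dominates $\sigma R$ on $R\in[0,1]$ is elementary: when $3\sigma R\leq 2$ it is immediate, and otherwise squaring reduces it to $(9\sigma^{2}-12)R\leq 12\sigma$, which holds because $R\leq 1$ and $\sigma\leq (1+\sqrt 7)/3$ by Remark \ref{curvature scalar bound}. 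This yields \eqref{riccibound}.

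Next, for \eqref{asympBene}, the positivity of Ricci forces $f$ to be bounded above (the hypothesis Munteanu requires, as noted just after \eqref{munteanu22}), so the assumption $|Rm|r=o(1)$ upgrades via \eqref{munteanu22} to the exponential decay $|Rm|\leq c(1+r)^{3(n+1)}e^{-r}$. Combining the pointwise estimates $|\nabla R|\leq 2|Ric|\,|\nabla f|\leq C_{n}|Rm|$, $R\leq C_{n}|Rm|$, and $|\nabla f|\leq 1$ (from $R+|\nabla f|^{2}=1$), I obtain
\begin{equation*}
|\nabla R+R\nabla f|\;\leq\;C_{n}|Rm|\;\leq\;C(1+r)^{3(n+1)}e^{-r}.
\end{equation*}
Choosing $\Omega_{\ell}=B_{\ell}(p)$ (passing to a nearby regular radius if necessary) and using the Bishop--Gromov area bound $\mathrm{Area}(\partial B_{\ell})\leq C\ell^{n-1}$, valid since $Ric\geq 0$, the integral $\int_{\partial B_{\ell}}|\nabla R+R\nabla f|$ is controlled by an exponentially decaying polynomial in $\ell$ and thus tends to $0$.

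Applying Theorem \ref{thmA} now gives the trichotomy: $(M^{n},g,f)$ is Ricci flat with constant $f$, or a quotient of $N^{n-1}\times\mathbb{R}$ with $N$ Ricci flat, or isometric to the Bryant soliton. The first is directly excluded by $Ric>0$, and in the second $Ric$ has a zero eigenvalue along the $\mathbb{R}$-factor, also contradicting $Ric>0$. Hence $(M^{n},g,f)$ is the Bryant soliton up to scalings.

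The main obstacle is the integral condition. The naive bound $|Rm|\lesssim r^{-1}$ (the generic rate for non-collapsed steady solitons) is not enough against polynomial area growth for $n\geq 3$; the exponential improvement provided by Munteanu's theorem is what makes the argument work, and it is precisely the positivity of Ricci that unlocks this improvement by guaranteeing the upper bound on $f$.
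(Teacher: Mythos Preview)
Your argument is correct, but it proceeds by a genuinely different route than the paper's. You reduce everything to Theorem~\ref{thmA}: first converting the $|Ric|$-hypothesis, via the Cauchy--Schwarz bound $|Ric|\geq |\nabla R|/(2|\nabla f|)$, into the pointwise inequality \eqref{riccibound}, and then using Munteanu's exponential decay \eqref{munteanu22} to verify \eqref{asympBene}. The paper instead goes back to the divergence identity \eqref{imptinf} directly: it rewrites the boundary term as $\int_{\partial\Omega}(1-R)\langle\nabla R+2R\nabla f,\nu\rangle$, uses $Ric>0$ and Kato's inequality to arrive at the bulk integrand $\tfrac{3}{2}|\nabla R|^{2}-4|Ric|\,|\nabla f|^{2}-2R^{2}|\nabla f|^{2}$, and observes that the $|Ric|$-hypothesis is precisely what makes this nonnegative. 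In other words, the paper's proof reveals where the hypothesis comes from (it is tailored to a specific integrand), while your proof is more modular and shows that Theorem~\ref{thmMunteanu} is in fact a corollary of Theorem~\ref{thmA}. Your use of Bishop--Gromov for the area bound (valid since $Ric>0$) is actually cleaner than the paper's informal equality $|\partial\Omega_\ell|=\omega_{n-1}r^{n-1}$.
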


According to Hamilton \cite{Hamilton,Hamilton2}, a Riemannian manifold $(M^n,\,g)$ is {\it positively pinched Ricci
curvature} if there is a uniform constant $\delta > 0$ such that $$\delta Rg\leq Ric(g).$$ Deng and Zhu \cite{deng2015mathz} proved that any $(n\geq 2)$-dimensional complete noncompact steady K\"ahler-Ricci soliton $(M^n,\,g,\,f)$ with positively pinched Ricci curvature should be
Ricci flat (provided that there is a point $p$ so that $\nabla f(p)=0$). The existence of such a point $p$ is called equilibrium point condition. Under our approach, we shall show that this condition can be removed. To be precise, we have established the following results.

	\begin{corollary}\label{maintheoremkahler}
	Let $\big(M^n,\,g,\,f)$, $n\geq3$, be a complete noncompact steady gradient K\"ahler-Ricci soliton. If $(M^n,\,g)$ has positively pinched Ricci curvature 
	, then $(M^n,\,g,\,f)$ is Ricci flat.
	\end{corollary}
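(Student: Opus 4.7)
The plan is to reduce the statement, via a dichotomy for the scalar curvature $R$, to the Kähler rigidity result of Deng and Zhu \cite{deng2015mathz}, which requires an equilibrium point. I would start from the identity $\Delta_f R = -2|Ric|^2$ (valid on every steady gradient Ricci soliton, with $\Delta_f := \Delta - \nabla_{\nabla f}$), so that $R$ is $f$-superharmonic. Combined with $R \geq 0$ from \cite{chen2009}, the strong minimum principle yields the dichotomy: either $R \equiv 0$ on $M$, or $R > 0$ everywhere on $M$. In the first alternative, the pinching hypothesis $\delta R g \leq Ric$ immediately forces $Ric \equiv 0$, which is the desired conclusion.

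Assume, for contradiction, the second alternative: $R > 0$ everywhere, so by positive pinching $Ric > 0$ everywhere. Here I would invoke \cite[Theorem 9.56]{CLN1}: on a complete noncompact steady soliton with positively pinched Ricci curvature, $R$ decays exponentially, and in particular $R \to 0$ at infinity. Because $R$ is continuous, positive, and vanishes at infinity, it attains its (strictly positive) supremum at some interior point $p \in M$. At $p$, the identity $\nabla R = 2\,Ric(\nabla f)$ (a consequence of the contracted second Bianchi identity and the soliton equation) together with the invertibility of $Ric(p)$ force $\nabla f(p) = 0$, producing an equilibrium point.

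At this stage the Kähler hypothesis enters essentially, via Deng-Zhu's theorem \cite{deng2015mathz}: the existence of an equilibrium point on a complete noncompact steady Kähler-Ricci soliton with positively pinched Ricci curvature implies that $(M^n,g,f)$ is Ricci flat, contradicting $R > 0$. Hence only the first case of the dichotomy can occur, and $(M^n,g,f)$ is Ricci flat. The main obstacle is the passage from the pinching hypothesis to the existence of an equilibrium point: positive pinching is needed both to invert $Ric$ at the maximum of $R$ and (through \cite[Theorem 9.56]{CLN1}) to guarantee that the supremum is actually attained. Without either ingredient the argument breaks down, which explains why the Kähler framework, together with Chow-Lu-Ni's exponential decay, allows the equilibrium-point hypothesis of Deng-Zhu to be removed.
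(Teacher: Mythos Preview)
Your argument is correct (up to two harmless sign slips: the soliton identity reads $\nabla R=-2\,Ric(\nabla f)$, and the drift Laplacian satisfying $\Delta_f R=-2|Ric|^2$ here is $\Delta+\nabla_{\nabla f}$, not $\Delta-\nabla_{\nabla f}$; neither affects the logic). However, your route is genuinely different from the paper's.

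The paper never invokes Deng--Zhu's rigidity theorem. Instead it feeds the K\"ahler inequality $2Ric(\nabla f,\nabla f)\le R|\nabla f|^2$ into the integral estimate \eqref{kahler} derived from the divergence formula (Lemma~\ref{lema22}), uses the Chow--Lu--Ni exponential decay only to kill the boundary term, and concludes $D\equiv 0$ together with $\nabla R+R\nabla f=0$; a dimension count then forces $R|\nabla f|\equiv 0$ for $n\ge 3$, hence Ricci flatness. Your approach uses the exponential decay in a different way: to force a global maximum of $R$, whence an equilibrium point via the invertibility of $Ric$, and then hands everything to Deng--Zhu as a black box.

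What each approach buys: your argument is shorter, entirely elementary up to the black box, and works for every pinching constant $\delta>0$. The paper's approach is independent of \cite{deng2015mathz} (indeed it reproves their result rather than reducing to it) and sits inside the $D$-tensor framework; as the proof notes, with the pinching $\frac{\sigma}{2}Rg\le Ric$ the same integral method applies to arbitrary (not necessarily K\"ahler) steady solitons, whereas your reduction uses the K\"ahler hypothesis essentially through Deng--Zhu's theorem.
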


Thus, Corollary \ref{maintheoremkahler} answers a question proposed by Chow, Lu and Ni in case of steady K\"ahler-Ricci solitons (cf. \cite{CLN1,ni2005}), and proves Corollary 1.5 in \cite{deng2015mathz}. In fact, the above corollary still holds for any steady gradient Ricci soliton (not necessarily K\"ahler). 

Steady Ricci solitons with pinched Ricci curvature have been studied in the past years (cf. \cite{deng2021} and the references therein). In \cite{ni2005}, Ni proved that any steady Ricci soliton with pinched Ricci curvature and nonnegative sectional curvature must be flat. Then, Deng and Zhu \cite{deng2015mathz} proved that Ni’s result is true without the assumption over the sectional curvature for steady K\"ahler-Ricci solitons.

\begin{remark}
   We highlight that in the K\"ahler case, Cao found two examples of complete rotationally symmetric noncompact gradient steady K\"ahler-Ricci solitons (see \cite{cao2011} and the references therein). These examples are $U(n)$ invariant and have positive sectional curvature. 
\end{remark}

\section{Background}

Throughout this section we review some basic facts and present key results that will be useful in the proof of our main result. We start by recalling that for a Riemannian manifold $(M^{n},\,g),$ $n\geq 3,$ the Weyl tensor $W$ is defined by the following decomposition formula
\begin{eqnarray*}
\label{weyl}
R_{ijkl}&=&W_{ijkl}+\frac{1}{n-2}\big(R_{ik}g_{jl}+R_{jl}g_{ik}-R_{il}g_{jk}-R_{jk}g_{il}\big)\nonumber\\&&-\frac{R}{(n-1)(n-2)}\big(g_{jl}g_{ik}-g_{il}g_{jk}\big),
\end{eqnarray*}
where $R_{ijkl}$ stands for the Riemannian curvature operator. Moreover, the Cotton tensor $C$ is given according  to

\begin{equation*}
\label{cotton}
\displaystyle{C_{ijk}=\nabla_{i}R_{jk}-\nabla_{j}R_{ik}-\frac{1}{2(n-1)}\big(\nabla_{i}R g_{jk}-\nabla_{j}R g_{ik}).}
\end{equation*} 

 The Cotton and Weyl tensors are related by the following equation on steady gradient Ricci solitons (cf. \cite[Lemma 2.4]{cao2014}).

\begin{lemma}\label{lem20}
	Let $\big(M^n,\,g,\,f)$ be a steady gradient Ricci soliton. Then:
	\begin{eqnarray*}
		C_{ijk}&=&W_{ijks}\nabla^{s}f+D_{ijk},
	\end{eqnarray*} where the $D$-tensor is given by 
	\begin{eqnarray}\label{tensorD}
	D_{ijk} &=& \frac{1}{n-2}(R_{ik}\nabla_{j}f - R_{jk}\nabla_{i}f) \nonumber\\
	&+& \frac{1}{2(n-1)(n-2)}[g_{jk}(\nabla_{i}R+2R\nabla_{i}f)-g_{ik}(\nabla_{j}R+2R\nabla_{j}f)]
	\end{eqnarray}
\end{lemma}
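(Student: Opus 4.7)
The plan is to derive the identity as a purely algebraic consequence of three ingredients: the Weyl decomposition of the Riemann tensor recorded at the start of the Background section; the commutator of covariant derivatives applied to $\nabla f$, which is the only point where the soliton equation is essentially used; and the standard identity $R_{ij}\nabla^{j}f=-\tfrac12\nabla_{i}R$ obtained by differentiating the conservation law $R+|\nabla f|^{2}=\text{const}$.

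The first step is to use $R_{jk}=\nabla_{j}\nabla_{k}f$ to turn the antisymmetrized derivative of Ricci into a Riemann-times-$\nabla f$ expression. Commuting the Hessian of $f$ yields
\begin{equation*}
\nabla_{i}R_{jk}-\nabla_{j}R_{ik}\;=\;[\nabla_{i},\nabla_{j}]\nabla_{k}f\;=\;R_{ijkl}\nabla^{l}f,
\end{equation*}
up to the sign fixed by the paper's curvature convention. This is the sole place where the soliton equation enters.

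Next I would substitute the Weyl decomposition of $R_{ijkl}$ into $R_{ijkl}\nabla^{l}f$. The two ``Ricci times metric'' cross terms directly give $R_{ik}\nabla_{j}f$ and $R_{jk}\nabla_{i}f$ with coefficient $\tfrac{1}{n-2}$; the other two contractions $R_{il}\nabla^{l}f$ and $R_{jl}\nabla^{l}f$ are converted to $-\tfrac12\nabla_{i}R$ and $-\tfrac12\nabla_{j}R$ via the conservation identity; and the scalar-curvature part of the Weyl decomposition produces the $R\,\nabla f$ contributions with coefficient $\tfrac{R}{(n-1)(n-2)}$. Then I would subtract the trace correction $\tfrac{1}{2(n-1)}(\nabla_{i}R\,g_{jk}-\nabla_{j}R\,g_{ik})$ coming from the definition of the Cotton tensor $C_{ijk}$. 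The two resulting $\nabla R$ contributions combine through the elementary identity $\tfrac{1}{2(n-2)}-\tfrac{1}{2(n-1)}=\tfrac{1}{2(n-1)(n-2)}$, and pairing these with the $R\,\nabla f$ terms produces exactly the bracket $g_{jk}(\nabla_{i}R+2R\nabla_{i}f)-g_{ik}(\nabla_{j}R+2R\nabla_{j}f)$ appearing in \eqref{tensorD}. What remains is the Weyl piece $W_{ijkl}\nabla^{l}f$ and the $\tfrac{1}{n-2}(R_{ik}\nabla_{j}f-R_{jk}\nabla_{i}f)$ piece, which are respectively the first term of the claim and the first term of $D_{ijk}$.

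There is no analytic or conceptual obstacle; the argument is a single algebraic manipulation. The only delicate point is sign and index bookkeeping: one must keep track consistently of the curvature-sign convention implicit in $[\nabla_{i},\nabla_{j}]\nabla_{k}f=R_{ijkl}\nabla^{l}f$, of the sign of the scalar identity $R_{ij}\nabla^{j}f=-\tfrac12\nabla_{i}R$, and of the pairing of the $\tfrac{1}{2(n-2)}$ and $\tfrac{1}{2(n-1)}$ coefficients, so that the three blocks (Ricci-cross, $\nabla R$-trace, $R\nabla f$-trace) assemble with the exact coefficients prescribed in \eqref{tensorD}.
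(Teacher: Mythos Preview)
Your argument is correct and is exactly the standard derivation: commute covariant derivatives on $\nabla f$ using the soliton equation, insert the Weyl decomposition, replace $R_{il}\nabla^{l}f$ by $-\tfrac12\nabla_{i}R$, and combine the $\tfrac{1}{2(n-2)}$ and $\tfrac{1}{2(n-1)}$ trace pieces. The paper itself does not supply a proof for this lemma but merely cites \cite[Lemma~2.4]{cao2014}; the computation you outline is precisely the one given there, so your approach coincides with the intended proof.
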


It is well-known that a gradient Ricci soliton satisfies the equation (cf. \cite[Proposition 3]{brendle2011})
\begin{eqnarray}\label{RF1}
\nabla R=-2Ric(\nabla f).
\end{eqnarray} Moreover, it also satisfies
\begin{eqnarray}\label{Rf}
R + |\nabla f|^{2}=C_{0}
\end{eqnarray} at each any point on $M,$ where $C_{0}$ is a positive constant. Up to a normalization of $f,$ we may assume that $C_{0}=1.$ Therefore, a steady (normalized) gradient Ricci soliton satisfies
\begin{eqnarray}
R + |\nabla f|^{2}=1\nonumber.
\end{eqnarray} We remember that a complete steady gradient Ricci soliton has nonnegative scalar curvature (see \cite[Lemma 2.2]{cao2014}), i.e., $R\geq0.$ Therefore, we conclude that a steady (normalized) gradient Ricci soliton must satisfy $$0\leq R\leq 1.$$

As a consequence \eqref{tensorD} we have the following key lemma (see \cite[Proposition 4]{brendle2011}). 
\begin{lemma}\label{lem200}
	Let $\big(M^n,\,g,\,f)$ be a steady gradient Ricci soliton. Then we have:
	\begin{eqnarray*}
|D|^{2} + \frac{|\nabla R + 2R\nabla f|^{2}}{2(n-1)(n-2)^{2}}
&=& -\frac{(1-R)}{(n-2)^{2}}\Delta R-\frac{(1-R)}{(n-2)^{2}}\langle\nabla R,\,\nabla f\rangle -\frac{1}{2(n-2)^{2}}|\nabla R|^{2}.
\end{eqnarray*}
\end{lemma}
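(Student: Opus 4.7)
The plan is to compute $|D|^2$ directly from \eqref{tensorD} and then rewrite the result using the two standard steady-soliton identities: the first integral $|\nabla f|^2 = 1-R$ from \eqref{eqR1}, and the Bochner-type relation $\Delta R + \langle \nabla R, \nabla f\rangle = -2|Ric|^2$.

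First I would split $D_{ijk}$ into its two natural pieces, writing $D_{ijk} = \tfrac{1}{n-2}A_{ijk} + \tfrac{1}{2(n-1)(n-2)}B_{ijk}$ where $A_{ijk} := R_{ik}\nabla_j f - R_{jk}\nabla_i f$ and $B_{ijk} := g_{jk}E_i - g_{ik}E_j$ with $E_i := \nabla_i R + 2R\nabla_i f$. Squaring produces three pieces: $|A|^2$, $\langle A,B\rangle$, and $|B|^2$. In an orthonormal frame, a trace count gives $|B|^2 = 2(n-1)|E|^2$ immediately. For $|A|^2$, expanding the square and using \eqref{RF1} to identify $\sum_i R_{ik}\nabla_i f$ with $-\tfrac12\nabla_k R$ yields $|A|^2 = 2|Ric|^2 |\nabla f|^2 - \tfrac12 |\nabla R|^2$.

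The cross term requires slightly more care: expanding $\langle A,B\rangle$ into four contractions, two reduce to $R\langle\nabla f, E\rangle$ (via $\mathrm{tr}\,Ric = R$) and two reduce to $\tfrac12 \langle\nabla R, E\rangle$ (via \eqref{RF1}), and the signs combine to give
\begin{equation*}
\langle A, B\rangle = -\langle \nabla R + 2R\nabla f,\, E\rangle = -|E|^2,
\end{equation*}
which is precisely the cancellation that makes $E$ the right auxiliary vector. Assembling the three contributions yields
\begin{equation*}
|D|^2 + \frac{|E|^2}{2(n-1)(n-2)^2} = \frac{2|Ric|^2 |\nabla f|^2}{(n-2)^2} - \frac{|\nabla R|^2}{2(n-2)^2}.
\end{equation*}

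To finish, I would substitute $|\nabla f|^2 = 1-R$ from \eqref{eqR1} and $2|Ric|^2 = -\Delta R - \langle\nabla R, \nabla f\rangle$; the latter identity follows from taking the divergence of \eqref{RF1} and combining the contracted second Bianchi identity $\nabla^j R_{ij} = \tfrac12 \nabla_i R$ with the soliton equation $R_{ij} = \nabla_i\nabla_j f$. This substitution produces the right-hand side of the lemma exactly. The only real subtlety in the argument is the sign-tracking of the four cross-term contractions; no new geometric input is needed beyond the basic steady-soliton identities.
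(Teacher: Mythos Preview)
Your proposal is correct and follows essentially the same approach as the paper's own proof: both expand $|D|^2$ from \eqref{tensorD} by splitting it into the Ricci-piece and the $g_{jk}E_i-g_{ik}E_j$ piece, compute the three resulting contributions (where the cross term collapses to $-|E|^2$ via \eqref{RF1}), and then finish by substituting $|\nabla f|^2=1-R$ together with the identity $2|Ric|^2=-\Delta R-\langle\nabla R,\nabla f\rangle$. Your $A$, $B$, $E$ notation packages the same computation the paper carries out term by term.
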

\begin{proof}
Let us rewrite the norm of $D$ only depending of the function $f$ and the scalar curvature $R$. We begin the computation using \eqref{tensorD}. To that end, we start with the following equality:
\begin{eqnarray*}
|D|^{2} &=& \frac{2}{(n-2)^{2}}|Ric|^{2}|\nabla f|^{2} -\frac{2}{(n-2)^{2}}R_{ik}\nabla_jfR_{jk}\nabla_if \nonumber\\
&+&\frac{1}{2(n-1)(n-2)^{2}}|\nabla R + 2R\nabla f|^{2}\nonumber\\
&+&\frac{2}{(n-1)(n-2)^{2}}(R_{ik}\nabla_{j}f - R_{jk}\nabla_{i}f)(\nabla_{i}R+2R\nabla_{i}f)g_{jk}.
\end{eqnarray*}
Then, by using \eqref{Eq1}, \eqref{RF1} and \eqref{Rf} we get
\begin{eqnarray*}
|D|^{2} &=& \frac{2}{(n-2)^{2}}|Ric|^{2}|\nabla f|^{2} -\frac{2}{(n-2)^{2}}\nabla_{i}\nabla_{k}f\nabla_jf\nabla_{j}\nabla_{k}f\nabla_if \nonumber\\
&+&\frac{1}{2(n-1)(n-2)^{2}}|\nabla R + 2R\nabla f|^{2}\nonumber\\
&-&\frac{1}{(n-1)(n-2)^{2}}(\nabla_{i}R + 2R\nabla_{i}f)(\nabla_{i}R+2R\nabla_{i}f)\nonumber\\
&=& \frac{2}{(n-2)^{2}}|Ric|^{2}|\nabla f|^{2} -\frac{1}{2(n-2)^{2}}|\nabla R|^{2}\nonumber\\
&-&\frac{1}{2(n-1)(n-2)^{2}}(|\nabla R|^{2}+4R\langle\nabla R,\,\nabla f\rangle+4R^{2}|\nabla f|^{2}).\nonumber\\
\end{eqnarray*}

Now, we need to use the following identity which comes from \eqref{Eq1}:
\begin{eqnarray*}\label{eq1}
2Ric(\nabla f)=\nabla |\nabla f|^{2}.
\end{eqnarray*}

 Then, taking the divergence of the above equation and again using \eqref{Eq1} and the contracted second Bianchi identity, we get
 \begin{eqnarray*}
 2|Ric|^{2}+ \langle\nabla R,\,\nabla f\rangle=\Delta|\nabla f|^{2}.
 \end{eqnarray*}
 Thus, from \eqref{Rf} we have
  \begin{eqnarray*}\label{topd}
 2|Ric|^{2}=-\langle\nabla R,\,\nabla f\rangle-\Delta R.
 \end{eqnarray*}
 Therefore, combining this identities we obtain 
 \begin{eqnarray*}
|D|^{2} + \frac{|\nabla R + 2R\nabla f|^{2}}{2(n-1)(n-2)^{2}}
&=& \frac{-1}{(n-2)^{2}}|\nabla f|^{2}\Delta R-\frac{|\nabla f|^{2}}{(n-2)^{2}}\langle\nabla R,\,\nabla f\rangle -\frac{1}{2(n-2)^{2}}|\nabla R|^{2}.\nonumber\\
\end{eqnarray*}
By using one more time \eqref{Rf} in the above equation the result follows.
\end{proof}

\begin{remark}
	We need to point out that Proposition 4 in \cite{brendle2011} follows from the above lemma considering $n=3.$ In fact, we can rewrite Lemma \ref{lem200} in the following form
	\begin{eqnarray*}
		|D|^{2} 
		&=& \frac{-(1-R)\Delta R}{(n-2)^{2}}-\frac{\langle\nabla R,\,\nabla f\rangle}{(n-2)^{2}} -\frac{n|\nabla R|^{2}}{2(n-1)(n-2)^{2}}\nonumber\\
		&+&\frac{(n-3)R\langle\nabla R,\,\nabla f\rangle}{(n-1)(n-2)^{2}}-\frac{2R^{2}(1-R)}{(n-1)(n-2)^{2}}.
	\end{eqnarray*}
	The above equation was the one used in \cite{brendle2011} for $n=3$.
\end{remark}

In what follows, we provide a new divergente formula for the steady Ricci solitons.

\begin{lemma}\label{lema22}
Let $\big(M^n,\,g,\,f)$ be a steady gradient Ricci soliton. Then,
	\begin{eqnarray*}
(1-R)^{3/2}div\left(\frac{\nabla R}{\sqrt{1-R}}-2\sqrt{1-R}\nabla f\right)=-(n-2)^{2}|D|^{2} - \frac{|\nabla R + 2R\nabla f|^{2}}{2(n-1)}-2R(1-R)^{2}.
\end{eqnarray*}
\end{lemma}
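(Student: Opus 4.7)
My plan is to perform a direct product-rule computation on the divergence, and then recognize the resulting Laplacian-type expression as the right-hand side of Lemma \ref{lem200} (multiplied by $-(n-2)^2$), modulo one leftover term coming from $\Delta f$.

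First I split the argument of the divergence into its two summands. For the first piece, a straightforward application of the product rule gives
\begin{equation*}
\mathrm{div}\!\left(\frac{\nabla R}{\sqrt{1-R}}\right) = \frac{\Delta R}{\sqrt{1-R}} + \frac{|\nabla R|^2}{2(1-R)^{3/2}}.
\end{equation*}
For the second piece the key input is $\Delta f = R$, obtained by tracing the defining equation $Ric=Hess\,f$; together with the product rule this yields
\begin{equation*}
\mathrm{div}\!\left(2\sqrt{1-R}\,\nabla f\right) = 2R\sqrt{1-R} - \frac{\langle\nabla R,\nabla f\rangle}{\sqrt{1-R}}.
\end{equation*}

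Next I multiply the difference of these two divergences by $(1-R)^{3/2}$; the half-integer powers of $(1-R)$ cancel, and one is left with
\begin{equation*}
(1-R)\Delta R + \tfrac{1}{2}|\nabla R|^2 + (1-R)\langle\nabla R,\nabla f\rangle - 2R(1-R)^2.
\end{equation*}
To finish, I invoke Lemma \ref{lem200} (using $|\nabla f|^2=1-R$) multiplied by $-(n-2)^2$, which asserts precisely that the first three terms above equal
\begin{equation*}
-(n-2)^2|D|^2 \;-\; \frac{|\nabla R + 2R\nabla f|^2}{2(n-1)}.
\end{equation*}
Substitution yields the claimed identity.

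The proof involves no serious obstacle; it is essentially a bookkeeping exercise whose only substantive inputs are the soliton identities $|\nabla f|^2=1-R$ and $\Delta f=R$, together with Lemma \ref{lem200}. What deserves attention is keeping track of the signs and the half-integer powers of $(1-R)$, and noticing that the $-2R(1-R)^2$ term appearing in the target identity arises precisely from the $\Delta f$ contribution in the second divergence above.
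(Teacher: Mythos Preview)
Your proof is correct and uses essentially the same ingredients as the paper's own argument: the product rule for the divergence, the identities $\Delta f=R$ and $|\nabla f|^2=1-R$, and Lemma~\ref{lem200}. The only difference is presentational: the paper begins with a general ansatz $Y=\phi(R)\nabla R+\psi(R)\nabla f$, computes $\div(Y)$, and then \emph{solves} for $\phi,\psi$ so that the cross terms $[2(1-R)\phi'-\phi]|\nabla R|^2$ and $2(1-R)[\psi'-\phi]\langle\nabla R,\nabla f\rangle$ vanish (which forces $\phi=k/\sqrt{1-R}$, $\psi=-2k\sqrt{1-R}$), whereas you compute directly with the given vector field and verify the identity; your route is shorter, the paper's explains where the particular choice of coefficients comes from.
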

\begin{proof}
Consider $Y=\phi(R)\nabla R+ \psi(R)\nabla f$. Since from \eqref{Eq1} we have $\Delta f= R$, we get
\begin{eqnarray*}
div(Y) = \phi\Delta R + \phi'|\nabla R|^{2}+\psi'\langle\nabla R,\,\nabla f\rangle + \psi\Delta f = \phi\Delta R + \phi'|\nabla R|^{2}+ \psi'\langle\nabla R,\,\nabla f\rangle + R\psi.
\end{eqnarray*}
Thus,
\begin{eqnarray*}
2(1-R)div(Y) =  2(1-R)\phi\Delta R+2(1-R)\phi'|\nabla R|^{2} + 2(1-R)\psi'\langle\nabla R,\,\nabla f\rangle + 2(1-R)R\psi.
\end{eqnarray*}
Combining this with the previous lemma we get
	\begin{eqnarray*}
&&2\phi(n-2)^{2}|D|^{2} + \frac{\phi|\nabla R + 2R\nabla f|^{2}}{(n-1)}
= -2\phi(1-R)\Delta R-2\phi(1-R)\langle\nabla R,\,\nabla f\rangle -\phi|\nabla R|^{2}\nonumber\\
&=&-2(1-R)div(Y)+2(1-R)\phi'|\nabla R|^{2} + 2(1-R)\psi'\langle\nabla R,\,\nabla f\rangle \nonumber\\
&+& 2(1-R)R\psi-2(1-R)\phi\langle\nabla R,\,\nabla f\rangle -\phi|\nabla R|^{2}.
\end{eqnarray*}
Hence,
	\begin{eqnarray*}
	&&2\phi(n-2)^{2}|D|^{2} + \frac{\phi|\nabla R + 2R\nabla f|^{2}}{(n-1)}=-2(1-R)div(Y)+ 2(1-R)R\psi\nonumber\\
	&+&[2(1-R)\phi'-\phi]|\nabla R|^{2} + 2(1-R)[\psi'-\phi]\langle\nabla R,\,\nabla f\rangle.
\end{eqnarray*}

Consider, 
\begin{eqnarray*}
\phi= \frac{k}{\sqrt{1-R}}\quad\mbox{and}\quad \psi= -2k\sqrt{1-R},
\end{eqnarray*}
where $k$ is a nonnull constant. Therefore, 
	\begin{eqnarray*}
	2\phi(n-2)^{2}|D|^{2} + \frac{\phi|\nabla R + 2R\nabla f|^{2}}{(n-1)}+4k(1-R)^{3/2}R=-2(1-R)div(Y).
\end{eqnarray*}
\end{proof}

\section{Proof of the main result}
This section is reserved for the proofs of the main results of this paper. Let us start with the following theorem:

\begin{proof}[{\bf Proof of Theorem {\ref{thmA}}}]
From Lemma \ref{lema22} we can infer that
	\begin{eqnarray*}
(1-R)^{3/2}div\left(Y\right)=-(n-2)^{2}|D|^{2}- \frac{|\nabla R + 2R\nabla f|^{2}}{2(n-1)}-2R(1-R)^{2},
\end{eqnarray*}
where $Y=\frac{\nabla R}{\sqrt{1-R}}-2\sqrt{1-R}\nabla f.$

Let $\Omega$ be a bounded domain on $M$ with smooth boundary. Using the divergence theorem, we get
\begin{eqnarray}\label{imptinf}
 &&\int_{\partial \Omega}\langle(1-R)\nabla R-2(1-R)^{2}\nabla f,\,\nu\rangle = \int_{\Omega\cap\{R<1\}}div\left((1-R)^{3/2}Y\right) \nonumber\\
 &=&\int_{\Omega\cap\{R<1\}}(1-R)^{3/2}div\left(Y\right) - \frac{3}{2}\int_{\Omega\cap\{R<1\}}\sqrt{1-R}\langle Y,\,\nabla R\rangle\nonumber\\
  &=&-\int_{\Omega\cap\{R<1\}}\left[(n-2)^{2}|D|^{2} + \frac{|\nabla R + 2R\nabla f|^{2}}{2(n-1)}+2R(1-R)^{2}\right]\nonumber\\
  &-& \frac{3}{2}\int_{\Omega\cap\{R<1\}}\sqrt{1-R}\langle Y,\,\nabla R\rangle\nonumber\\
   &=&-\int_{\Omega\cap\{R<1\}}\left[(n-2)^{2}|D|^{2} + \frac{|\nabla R + 2R\nabla f|^{2}}{2(n-1)}+2R(1-R)^{2}\right] \nonumber\\
   &-& \frac{3}{2}\int_{\Omega\cap\{R<1\}}\langle \nabla R- 2(1-R)\nabla f,\,\nabla R\rangle\nonumber\\
 &=&-\int_{\Omega\cap\{R<1\}}\left[(n-2)^{2}|D|^{2} + \frac{|\nabla R + 2R\nabla f|^{2}}{2(n-1)}+2R(1-R)^{2}\right] \nonumber\\
 &-& \frac{3}{2}\int_{\Omega\cap\{R<1\}} [|\nabla R|^{2}- 2(1-R)\langle\nabla f,\,\nabla R\rangle]\nonumber\\
  &=&-\int_{\Omega\cap\{R<1\}}\left[(n-2)^{2}|D|^{2} + \frac{|\nabla R + 2R\nabla f|^{2}}{2(n-1)}\right] \nonumber\\
 &-& \int_{\Omega\cap\{R<1\}} [\frac{3}{2}|\nabla R|^{2}+ 6(1-R)Ric(\nabla f,\,\nabla f)+2R(1-R)^{2}],
\end{eqnarray}
where $\nu$ is the normal vector field for $\Omega$. We used in the above equation the identity
\begin{eqnarray}\label{relacaof e R}
-2Ric(\nabla f)=\nabla R.
\end{eqnarray}
Furthermore, using $R=\Delta f$ we get
\begin{eqnarray*}\label{divparalelo}
 div((1-R)^{2}\nabla f)-4(1-R)Ric(\nabla f,\,\nabla f)= R(1-R)^{2}.
\end{eqnarray*}

Thus, from the above equations we have
\begin{eqnarray}\label{intgrande}
 \int_{\partial \Omega}\langle(1-R)\nabla R-2(1-R)^{2}\nabla f,\,\nu\rangle  
 &=&-\int_{\Omega\cap\{R<1\}}\left[(n-2)^{2}|D|^{2} + \frac{|\nabla R + 2R\nabla f|^{2}}{2(n-1)}\right] \nonumber\\
 &&- \int_{\Omega\cap\{R<1\}} \Bigg[\frac{3}{2}|\nabla R|^{2} - 2(1-R)Ric(\nabla f,\,\nabla f)\nonumber\\
 &&+2div((1-R)^{2}\nabla f)\Bigg],\nonumber
\end{eqnarray}
and so
\begin{eqnarray}\label{ric>0}
 \int_{\partial \Omega}\langle(1-R)\nabla R,\,\nu\rangle  
 &=&-\int_{\Omega\cap\{R<1\}}\left[(n-2)^{2}|D|^{2} + \frac{|\nabla R + 2R\nabla f|^{2}}{2(n-1)}\right] \nonumber\\
 &&- \int_{\Omega\cap\{R<1\}} [\frac{3}{2}|\nabla R|^{2} - 2(1-R)Ric(\nabla f,\,\nabla f)].
\end{eqnarray}

Moreover, a straightforward computation yields to
\begin{eqnarray*}
 \int_{\partial \Omega}\langle(1-R)\nabla R,\,\nu\rangle  &\leq&-\int_{\Omega\cap\{R<1\}}\left[(n-2)^{2}|D|^{2} + \frac{|\nabla R + R\nabla f|^{2}}{2(n-1)}\right] \nonumber\\
 && - \frac{3}{2}\int_{\Omega\cap\{R<1\}}\left[|\nabla R|^{2}+ \frac{1}{(n-1)}R^2|\nabla f|^{2}\right]  \nonumber\\
 &&- \frac{1}{(n-1)}\int_{\Omega\cap\{R<1\}}R\langle\nabla R,\,\nabla f\rangle 
 + \int_{\Omega\cap\{R<1\}} 2(1-R)Ric(\nabla f,\,\nabla f)\nonumber\\
 &\leq&-\int_{\Omega\cap\{R<1\}}\left[(n-2)^{2}|D|^{2} + \frac{n|\nabla R + R\nabla f|^{2}}{2(n-1)}\right] \nonumber\\
 && - \int_{\Omega\cap\{R<1\}}|\nabla R|^{2} + \frac{(n-4)}{2(n-1)}\int_{\Omega\cap\{R<1\}}R^2|\nabla f|^{2}  \nonumber\\
 &&+ \frac{n-2}{(n-1)}\int_{\Omega\cap\{R<1\}}R\langle\nabla R,\,\nabla f\rangle \nonumber\\
 &&+ \int_{\Omega\cap\{R<1\}} 2(1-R)Ric(\nabla f,\,\nabla f).\nonumber\\
\end{eqnarray*}
On the other hand,
\begin{eqnarray*}
 \int_{\partial \Omega}(1-R)\langle\nabla R+R\nabla f,\,\nu\rangle  
 &\leq&-\int_{\Omega\cap\{R<1\}}\left[(n-2)^{2}|D|^{2} + \frac{n|\nabla R + R\nabla f|^{2}}{2(n-1)}\right] \nonumber\\
 && - \int_{\Omega\cap\{R<1\}}|\nabla R|^{2} + \frac{(n-4)}{2(n-1)}\int_{\Omega\cap\{R<1\}}R^2|\nabla f|^{2}  \nonumber\\
 &&+ \frac{n-2}{(n-1)}\int_{\Omega\cap\{R<1\}}R\langle\nabla R,\,\nabla f\rangle \nonumber\\
 &&- \int_{\Omega\cap\{R<1\}} (1-R)\langle\nabla R,\,\nabla f\rangle + \int_{\partial \Omega}(1-R)R\langle \nabla f,\,\nu\rangle.
\end{eqnarray*}

We also have the following identity:
\begin{eqnarray*}
\int_{\partial \Omega}(1-R)R\langle \nabla f,\,\nu\rangle 
&=&
\int_{\Omega\cap\{R<1\}}div((1-R)R\nabla f)\nonumber\\
&=& \int_{\Omega\cap\{R<1\}}[(1-R)R\Delta f + (1-R)\langle\nabla R,\,\nabla f\rangle- R\langle\nabla R,\,\nabla f\rangle]\nonumber\\
&=& \int_{\Omega\cap\{R<1\}}[R^{2}|\nabla f|^{2} + (1-R)\langle\nabla R,\,\nabla f\rangle- R\langle\nabla R,\,\nabla f\rangle].
\end{eqnarray*}
Therefore, 
\begin{eqnarray}\label{kahler}
 \int_{\partial \Omega}(1-R)\langle\nabla R+R\nabla f,\,\nu\rangle  
 &\leq&-\int_{\Omega\cap\{R<1\}}\left[(n-2)^{2}|D|^{2} + \frac{n|\nabla R + R\nabla f|^{2}}{2(n-1)}\right] \nonumber\\
 && - \int_{\Omega\cap\{R<1\}}|\nabla R|^{2} + \frac{3(n-2)}{2(n-1)}\int_{\Omega\cap\{R<1\}}R^2|\nabla f|^{2}  \nonumber\\
 &&+ \frac{1}{(n-1)}\int_{\Omega\cap\{R<1\}}2RRic(\nabla f,\,\nabla f)\\
  &=&-\int_{\Omega\cap\{R<1\}}\left[(n-2)^{2}|D|^{2}\right] \nonumber\\
 && + \int_{\Omega\cap\{R<1\}}\left[-\frac{(3n-2)}{2(n-1)}|\nabla R|^{2} + \frac{(n-3)}{(n-1)}R^2|\nabla f|^{2} \right] \nonumber\\
 &&+ \frac{(n+1)}{(n-1)}\int_{\Omega\cap\{R<1\}}2RRic(\nabla f,\,\nabla f).\nonumber
\end{eqnarray}

Considering \eqref{relacaof e R} we can infer that
\begin{eqnarray*}
 2Ric(\nabla f,\,\nabla f)\leq |\nabla R||\nabla f|.
\end{eqnarray*}
Thus,
\begin{eqnarray*}
 \int_{\partial \Omega}(1-R)\langle\nabla R+R\nabla f,\,\nu\rangle  
 &\leq&-\int_{\Omega\cap\{R<1\}}\left[(n-2)^{2}|D|^{2}\right] \nonumber\\
 && + \int_{\Omega\cap\{R<1\}}\Bigg[-\frac{(3n-2)}{2(n-1)}|\nabla R|^{2} +\frac{(n+1)R|\nabla f|}{(n-1)}|\nabla R|\nonumber\\
 &&+ \frac{(n-3)}{(n-1)}R^2|\nabla f|^{2}\Bigg].
\end{eqnarray*}

Now, from hypothesis $$\dfrac{(n+1)+\sqrt{(n-1)(7n-13)}}{3n-2}R|\nabla f|\leq|\nabla R|.$$
Therefore,
$$-\frac{(3n-2)}{2(n-1)}|\nabla R|^{2} +\frac{(n+1)R|\nabla f|}{(n-1)}|\nabla R|+ \frac{(n-3)}{(n-1)}R^2|\nabla f|^{2}\leq0.$$

 Consequently,
\begin{eqnarray*}
\int_{\Omega_\ell\cap\{R<1\}}\left[(n-2)^{2}|D|^{2}\right]\leq - \int_{\partial \Omega_\ell}(1-R)\langle\nabla R+R\nabla f,\,\nu\rangle.
\end{eqnarray*}
Now, consider an exhaustion
of $M$ by bounded domains $\Omega_\ell$  such that
	$$\displaystyle\lim_{\ell\rightarrow\infty} \int_{\partial \Omega_\ell}|\nabla R+R\nabla f|=0.$$
Then, making $\ell\rightarrow\infty$, we have
\begin{eqnarray*}
 &&\int_{\{R<1\}}\left[(n-2)^{2}|D|^{2}\right] \leq  - \displaystyle\lim_{\ell\rightarrow\infty} \int_{\partial \Omega_\ell}(1-R)\langle\nabla R+R\nabla f,\,\nu\rangle \nonumber\\
 &&\leq \left| \displaystyle\lim_{\ell\rightarrow\infty} \int_{\partial \Omega_\ell}(1-R)\langle\nabla R+R\nabla f,\,\nu\rangle\right| \\
 &&\leq \displaystyle\lim_{\ell\rightarrow\infty} \int_{\partial \Omega_\ell}(1-R)|\langle\nabla R+R\nabla f,\,\nu\rangle|\leq 
 \displaystyle\lim_{\ell\rightarrow\infty} \int_{\partial \Omega_\ell}|\nabla R + R\nabla f| =0.
\end{eqnarray*}
Since $M$ is complete, from \eqref{Rf} we can infer that $\{R<1\}$ is dense. Otherwise $f$ will be a constant function. Therefore, the $D$-tensor is identically zero.

To finish the proof, we need to invoke \cite[Theorem 1.1]{cao2020}, \cite[Theorem 1.4]{cao2013} and \cite[Theorem 1.2]{cao2011}. 
\end{proof}

\begin{proof}[{\bf Proof of Theorem \ref{thmMunteanu}}]
From \eqref{imptinf} we get
\begin{eqnarray*}
 &&\int_{\partial \Omega}(1-R)\langle\nabla R-2(1-R)\nabla f,\,\nu\rangle =-\int_{\Omega\cap\{R<1\}}\left[(n-2)^{2}|D|^{2} + \frac{|\nabla R + 2R\nabla f|^{2}}{2(n-1)}\right] \nonumber\\
 &-& \int_{\Omega\cap\{R<1\}} [\frac{3}{2}|\nabla R|^{2}+ 6(1-R)Ric(\nabla f,\,\nabla f)+2R(1-R)^{2}].
\end{eqnarray*}

Then, a straightforward computation yields to 
\begin{eqnarray*}
 &&\int_{\partial \Omega}(1-R)\langle\nabla R+2R\nabla f,\,\nu\rangle =-\int_{\Omega\cap\{R<1\}}\left[(n-2)^{2}|D|^{2} + \frac{|\nabla R + 2R\nabla f|^{2}}{2(n-1)}\right] \nonumber\\
 &-& \int_{\Omega\cap\{R<1\}} [\frac{3}{2}|\nabla R|^{2}+ 6(1-R)Ric(\nabla f,\,\nabla f)+2R(1-R)^{2}] + 2\int_{\partial \Omega}(1-R)\langle\nabla f,\,\nu\rangle.
\end{eqnarray*}
Then, we use that $\div [2(1-R)\nabla f]=-2\langle\nabla R,\,\nabla f\rangle + 2(1-R)R$ to obtain
\begin{eqnarray*}
 &&\int_{\partial \Omega}(1-R)\langle\nabla R+2R\nabla f,\,\nu\rangle =-\int_{\Omega\cap\{R<1\}}\left[(n-2)^{2}|D|^{2} + \frac{|\nabla R + 2R\nabla f|^{2}}{2(n-1)}\right] \nonumber\\
 &-& \int_{\Omega\cap\{R<1\}} [\frac{3}{2}|\nabla R|^{2}+ 2(1-3R)Ric(\nabla f,\,\nabla f)-2R^2(1-R)].
\end{eqnarray*}

Now, since $Ric>0$ and $-R\geq-1$ we can infer that
\begin{eqnarray*}
 &&-\int_{\partial \Omega}(1-R)\langle\nabla R+2R\nabla f,\,\nu\rangle \geq \int_{\Omega\cap\{R<1\}}\left[(n-2)^{2}|D|^{2} + \frac{|\nabla R + 2R\nabla f|^{2}}{2(n-1)}\right] \nonumber\\
 &+& \int_{\Omega\cap\{R<1\}} [\frac{3}{2}|\nabla R|^{2}-4Ric(\nabla f,\,\nabla f)-2R^2(1-R)].
\end{eqnarray*}

From Kato’s inequality, for the steady solitons we have
$$|Ric|^2=|\nabla^2f|^2\geq|\nabla|\nabla f||^2=|\nabla\sqrt{1-R}|^2=\frac{|\nabla R|^2}{4|\nabla f|^2}.$$
Hence,
\begin{eqnarray}\label{kato}
2Ric(\nabla f,\,\nabla f) = -\langle\nabla R,\,\nabla f\rangle\leq |\nabla R||\nabla f|\leq 2|Ric||\nabla f|^2.
\end{eqnarray}
So,
\begin{eqnarray*}
 &&-\int_{\partial \Omega}(1-R)\langle\nabla R+2R\nabla f,\,\nu\rangle \geq \int_{\Omega\cap\{R<1\}}\left[(n-2)^{2}|D|^{2} + \frac{|\nabla R + 2R\nabla f|^{2}}{2(n-1)}\right] \nonumber\\
 &+& \int_{\Omega\cap\{R<1\}} [\frac{3}{2}|\nabla R|^{2}-4|Ric||\nabla f|^2-2R^2|\nabla f|^2].
\end{eqnarray*}

Now, consider the bounded domains as geodesic balls, i.e., $|\partial\Omega_
    \ell|=w_{n-1}r(\ell)^{n-1}$, where $w_{n-1}$ stands for the volume of the $(n-1)$-sphere. Here, $\ell\to\infty$ implies that $r\to\infty.$ For a nontrivial gradient steady soliton with $Ric \geq 0$ and $\lim_{r\to\infty} R = 0$, we know that  $|Ric|^2\leq R^2$ (see \cite{carrillo2009} and \cite[page 12]{chan2019}). Thus, from \eqref{munteanu22} and \eqref{kato} we have
\begin{eqnarray*}
 \int_{\{R<1\}}\left[(n-2)^{2}|D|^{2}\right]&\leq&\lim_{\ell\to\infty}\int_{\partial \Omega_\ell}(|\nabla R|+2R|\nabla f|)\leq\lim_{\ell\to\infty}\int_{\partial \Omega_{\ell}}(\sqrt{2}R+2R)|\nabla f|\nonumber\\ &\leq&A(n)\displaystyle\lim_{r\rightarrow\infty}(2+\sqrt{2})w_{n-1}r^{n-1}(1+r)^{3(n+1)}e^{-r}\nonumber\\
 &\leq&c\displaystyle\lim_{r\rightarrow\infty}(1+r)^{2(2n+1)}e^{-r}=0,
\end{eqnarray*}
where we use that $|\nabla R|\leq\sqrt{2}R.$ Here, $c=A(n)(2+\sqrt{2})w_{n-1}$ and $A(n)$ a constant depending on $n.$

Then, $D=0$ and the proof follows like in Theorem \ref{thmA}.
\end{proof}

\begin{proof}[{\bf Proof of Corollary \ref{maintheoremkahler}}]
It is well-known that for K\"ahler manifolds with nonnegative Ricci curvature the following inequality holds:
$$2Ric(\nabla f,\,\nabla f)\leq R|\nabla f|^{2},$$
see Theorem 3.2 in \cite{Deruelle2012}. So, from \eqref{kahler} we get 
\begin{eqnarray*}
 \int_{\partial \Omega}(1-R)\langle\nabla R+R\nabla f,\,\nu\rangle  
 &\leq&-\int_{\Omega\cap\{R<1\}}\left[(n-2)^{2}|D|^{2} + \frac{n|\nabla R + R\nabla f|^{2}}{2(n-1)}\right] \nonumber\\
 && - \int_{\Omega\cap\{R<1\}}|\nabla R|^{2} + \frac{3n-4}{2(n-1)}\int_{\Omega\cap\{R<1\}}R^2|\nabla f|^{2}.
 \end{eqnarray*}
Assuming $\frac{1}{2}\sqrt{\frac{3n-4}{2(n-1)}}Rg\leq Ric$, from \eqref{relacaof e R} we have
 \begin{eqnarray}\label{pinchedricci}
 \sqrt{\frac{3n-4}{2(n-1)}}R \leq 2Ric(\frac{\nabla f}{|\nabla f|},\,\frac{\nabla f}{|\nabla f|})= \frac{-1}{|\nabla f|^{2}}\langle\nabla R,\,\nabla f\rangle\leq \frac{|\nabla R|}{|\nabla f|}.
 \end{eqnarray}
 
 It is worth to say that for any steady gradient Ricci soliton (not necessarily K\"ahler), the above inequality holds considering $\frac{1}{2}\sigma Rg\leq Ric$, where $\sigma$ is given by Theorem \ref{thmA}. Thus, $\sigma R|\nabla f|\leq|\nabla R|.$

Now, the result follows by the same steps of Theorem {\ref{thmA}}. In fact, considering the bounded domains as geodesic balls, i.e., $|\partial\Omega_
    \ell|=w_{n-1}r(\ell)^{n-1}$, where $w_{n-1}$ stands for the volume of the $(n-1)$-sphere, from \eqref{eqR1} we can conclude that \eqref{asympBene} satisfies
    \begin{equation}\label{expdecay}
    \displaystyle\lim_{\ell\rightarrow+\infty} \int_{\partial \Omega_\ell}(|\nabla R| + R|\nabla f|) = \displaystyle\lim_{r\rightarrow+\infty}w_{n-1}r^{n-1}\left[o(e^{-r})+o(e^{-r})\sqrt{(1-o(e^{-r})})\right] = 0,\nonumber
    \end{equation}
    where we assume that $r\to\infty$ when $\ell\to\infty$.
Here, we consider that the scalar curvature has an exponential decay at infinity, i.e., 
    \begin{eqnarray*}
    R = o(e^{-r}),\quad r\rightarrow\infty,
    \end{eqnarray*}
    where $r$ stands for the geodesic distance from a fixed point, see \cite[Theorem 9.56]{CLN1}.

Therefore, we obtain $\nabla R+R\nabla f =0$ and $D=0$. Moreover, 
$$\frac{3n-4}{2(n-1)}R^{2}|\nabla f|^{2} = |\nabla R|^{2}.$$
So, either $n=2$ or $(M,\,g)$ is Ricci flat and $f$ a constant function. 
\end{proof}

\begin{acknowledgement}
The authors wants to thank Professor E. Ribeiro Jr. for
the helpful remarks and discussions. Jeferson Poveda was supported by PROPG-CAPES [Finance Code 001]. Benedito Leandro was partially supported by Brazilian National Council for
Scientific and Technological Development (CNPq Grant 403349/2021-4).
\end{acknowledgement}

\end{document}